\definecolor{hanblue}{rgb}{0.27, 0.42, 0.81}
\definecolor{red}{rgb}{1.0, 0.0, 0.0}
\newcommand{\res}{\mathop{\hbox{\vrule height 7pt width .5pt depth 0pt
\vrule height .5pt width 6pt depth 0pt}}\nolimits}
\newcommand{\R}{\mathbb{R}}
\newcommand{\Z}{\mathbb{Z}}
\newcommand{\N}{\mathbb{N}}
\newcommand{\Ha}{\mathcal{H}}
\newcommand{\astfootnote}[1]{
\let\oldthefootnote=\thefootnote
\setcounter{footnote}{0}
\renewcommand{\thefootnote}{$\bigstar$}
\footnote{#1}
\let\thefootnote=\oldthefootnote
}
\newtheorem{thm}{Theorem}[section]
\newtheorem{cor}[thm]{Corollary}
\theoremstyle{definition}
\newtheorem{defi}[thm]{Definition}
\newtheorem{pro}[thm]{Problem}
\theoremstyle{remark}
\newtheorem{rmk}[thm]{Remark}
\newtheorem{ex}[thm]{Example}
\begin{document}

\title{The oriented mailing problem and its convex relaxation}
\author{Marcello Carioni, Andrea Marchese, Annalisa Massaccesi, Alessandra Pluda, Riccardo Tione \astfootnote{The second and third author received partial support from GNAMPA-INdAM. The research of
the third author has been supported by European Union’s Horizon 2020 programme through project 752018.}}

\maketitle

\begin{abstract}

In this note we introduce a new model for the mailing problem in branched transportation
that takes into account the orientation of the moving particles. This gives an effective answer to~\cite[Problem 15.9]{Bernot2009}.
Moreover we define a convex relaxation in terms of rectifiable currents with group coefficients. We provide the problem with a notion of calibration. Using similar techniques we define a convex relaxation and a corresponding notion of calibration for a variant of the Steiner tree problem in which a connectedness constraint is assigned only among a certain partition of a given set of finitely many points.
\end{abstract}

\textbf{Keywords:} Branched transportation, mailing problem, calibrations, multi--material transport problem. 

\textbf{Mathematics Subject Classification (2010)}:   49Q10, 49Q15, 49Q20, 53C38, 90B06, 90B10.

\section{Introduction}

A common feature of some natural transportation problems is the tendency to group the mass particles during the transportation which leads to the formation of branched structures. Several equivalent models for the so--called branched transportation problem have been proposed (see e.g. \cite{Bernot2009,buttazzopratelli,Maddalena2003, santambrogio, Xia2003, Xia2005}). Via an Eulerian formulation (\cite{Xia2003}) one can describe the flow of particles and via a Lagrangian one (\cite{Maddalena2003}) one can describe the trajectory of each particle. The mailing problem is a branched transportation problem with the additional constraint that, for every mass particle, an initial position and a target destination are prescribed. Only the Lagrangian formulation provides a suitable description of the problem. In the classical models, the cost functional is obtained integrating a concave function -- depending on the total amount of mass passing through each point -- on the network created by the particles' trajectories. This is not satisfactory for many concrete problems, since it is reasonable to treat differently the particles flowing in one direction on a stretch of the network and the particles flowing in the opposite direction. The necessity of a model taking into account this feature was remarked in \cite[Problem 15.9]{Bernot2009}. In this note we introduce such a model for discrete transportation networks. 

The main idea is to give a \emph{label} to each particle, containing the information on its origin and destination. In this way, we can observe a ``labeled'' flow of particles and therefore on each stretch of the network we can identify a group of particles which flows in one direction and a group which flows in the opposite one and we take this into account when we compute the cost of such transportation. This way of distinguishing the particles fits in a recent variational formulation of the so--called \emph{multi--commodity flow problem} (see \cite{Mar_Mas_Stu_Ti}), for which a convex relaxation was proposed in \cite{MMT}. Such approach was initiated in \cite{Mar_Mas1} and \cite{Mar_Mas2} in the framework of the Steiner tree problem and the Gilbert-Steiner problem, respectively, in order to tackle the difficult task of proving the optimality of a candidate minimizer. Similar approaches were recently presented in~\cite{Bonafini, Bonafini2016,Bo_Ou, Wolanski}. More precisely, our strategy allows to prove the equivalence between the original problem and the minimization of a convex functional, which is defined on a non-convex set, though. Yet an interesting by-product is the possibility to define a notion of calibration, which proved to be an efficient tool to validate the minimality both from the theoretical (\cite{MMT,   Mar_Mas1, Mar_Mas2,   Ca_Plu, coveringbis}) and from the numerical point of view (\cite{Bonafini, Bonafini2016, Bo_Ou, Mas_Ou_Ve}).

In the last section of the paper, we employ similar techniques to define a convex relaxation of a variant of the Steiner tree problem. Given a set of points $S=\{p_1,\dots,p_n\}\subset\R^d$ and a partition $S_1,\dots,S_k$ of $S$, we seek a compact set $K$ of minimal length with the property that for every $i=1,\dots,k$ the points of $S_i$ are connected in $K$. We call this the \emph{partitioned Steiner tree problem}. The minimizers can have $k$ connected components or less,
depending on the position of the given points and the partition. We show with a simple example that the existence of a calibration is not a necessary condition for minimality, but it is only sufficient.

\section{Preliminaries}

This section aims at fixing the essential notation. We refer the reader to~\cite[Section~3]{Mar_Mas_Stu_Ti} for a fully detailed introduction.
Most of the paper can be understood by modeling a multi--material network as a weighted oriented 
graph ${\mathcal G}$ in $\R^d$, with a finite set of vertices $V({\mathcal G})\subset \R^d$, a set of 
straight edges $E({\mathcal G})$, and a \emph{vector--valued} 
multiplicity function $m:E({\mathcal G})\to\R^m$. 
Being oriented, each edge $e$ has an initial vertex $e^-$ and a final vertex $e^+$. 
For every edge $e\in E({\mathcal G})$ with multiplicity $m_e$ its boundary is represented 
by the $\R^m$--valued measure 
\begin{equation*}
\partial e:=m_e\delta_{e^+}-m_e\delta_{e^-}\,,
\end{equation*}
where $\delta_x$ is the Dirac delta at the point $x$. 
Therefore the boundary of $\mathcal{G}$ is the $\R^m$--valued measure
\begin{equation*}
\partial\mathcal{G}:=\sum_{e\in E(\mathcal{G})}\partial e\,.
\end{equation*}
Observe that every $\R^m$--valued measure $\mu$ can be naturally associated to an array of $m$ 
real--valued measures $\mu_1,\dots,\mu_m$. For every real--valued measure $\nu$ we denote by
$\nu^+,\nu^-$ respectively its positive and negative part, namely the positive measures 
$\nu^+:=\frac{1}{2}(|\nu|+\nu)$, $\nu^-:=\frac{1}{2}(|\nu|-\nu)$, where $|\nu|$ is the total variation 
measure of $\nu$.

It is immediate to check that if $\mu=\partial \mathcal{G}$ for some graph $\mathcal{G}$ in $\R^d$ 
with multiplicities in $\R^m$, then the measures $\mu_i^+$ and $\mu_i^-$ have the same total 
mass, for every $i=1,\dots,m$, as it happens for the positive and negative part of the boundary of (single-material) \emph{discrete mass fluxes} (see~\cite{Bran_Wirth17}).

The main novelty of the multi--material setting is that with vector--valued multiplicities we are 
allowed to consider a cost ${\mathcal C}:\R^m\to[0,+\infty)$ which distinguishes among different 
contributions and registers possibly different interactions between $m$ different materials, 
represented by the different coordinates of $\R^m$. More precisely, we define the \emph{energy} of 
a graph ${\mathcal G}$ as the weighted sum
\begin{equation*}
{\mathbb E}({\mathcal G}):=\sum_{e\in E({\mathcal G})} {\mathcal C}(m(e)){\rm Length}(e)\,.
\end{equation*}
Under suitable assumptions on the cost functional $\mathcal{C}$, it is reasonable to consider the 
variational problem of minimizing $\mathbb{E}(\mathcal{G})$ among all graphs with given 
boundary $\partial{\mathcal{G}}$. As it is common in Calculus of Variations it is more convenient to 
look for solutions in a larger class, which enjoys compactness properties. Therefore we introduce the more general notion 
of rectifiable 1--current with coefficients in $\Z^m$. We summarize below the necessary tools. 
We refer to~\cite{Mar_Mas_Stu_Ti, MMT,Mar_Mas1} for a detailed presentation of the topic.

\subsection{Rectifiable currents with coefficients in $\Z^m$}
Consider $\mathbb{R}^m$ endowed with a norm $\Vert\cdot\Vert$
and call  $\|\cdot\|^{\ast}$ the dual norm. 
Let $k\leq d$. For the purposes of this paper it suffices to consider the cases $k=0,1,2$. 
We denote $\Lambda_k(\R^d)$ the space of $k$--vectors in $\R^d$.
\begin{defi}[$\R^m$--valued $k$--covectors]
An $\R^m$--valued $k$--covector on $\R^d$ is a bilinear map 
\begin{equation*}
\omega : \Lambda_k(\R^d)\times \R^m \rightarrow \R\,.
\end{equation*}
We denote by $\Lambda^k_{\R^m}(\R^d)$ the space of 
$\R^m$--valued $k$--covectors on $\R^d$ and we endow it
with the norm
\begin{equation*}
\vert\omega\vert_c := \sup \{\|\omega(\tau,\cdot)\|^{\ast} :
 \tau\in \Lambda_k(\R^d)  \mbox{ is simple},\ |\tau| \leq 1\}\,.
\end{equation*}
An $\R^m$--valued differential $k$--form is a map 
\begin{equation*}
\omega : \R^d \rightarrow \Lambda^k_{\R^m}(\R^d)\, .
\end{equation*}
We denote by $C_c^\infty(\R^d, \Lambda^k_{\R^m}(\R^d))$ 
the space of smooth and compactly supported $\R^m$--valued differential $k$--forms. 
On this space one can consider the \emph{comass norm}
\begin{equation*}
\|\omega\|_{c} := \sup_{x\in \R^d} \vert\omega(x)\vert_c\,.
\end{equation*}
\end{defi}

The space $C_c^\infty(\R^d, \Lambda^k_{\R^m}(\R^d))$ is naturally endowed 
with a locally convex topology, built in analogy with the topology on the space 
of test functions with respect to which distributions are dual.

\begin{defi}[$k$--dimensional currents with coefficients in $\R^m$]
A $k$--dimensional current with coefficients in $\R^m$ is a linear map
\begin{equation*}
T:C_c^\infty(\R^d, \Lambda^k_{\R^m}(\R^d)) \rightarrow \R\, ,
\end{equation*}
which is continuous with respect to the topology mentioned above.
\begin{itemize}
\item For $k>0$, the \emph{boundary} of a $k$--current $T$ is a $(k-1)$--dimensional current 
with coefficients in $\R^m$, defined through the relation
\begin{equation*}
\partial T(\varphi):=T({\rm d}\varphi)\quad\forall\, \varphi\in 
C^\infty_c(\R^d, \Lambda^k_{\R^m}(\R^d))\,.
\end{equation*}
\item The \emph{mass} of a $k$--current $T$ is the quantity 
\begin{equation*}
\mathbb{M}(T) := \sup_{\|\omega\|_{c} \leq 1} T(\omega)\,.
\end{equation*}
\end{itemize} 
We define a $k$--rectifiable current with coefficients in $\Z^m$ as 
a $k$--current with coefficients in $\R^m$ that 
admits the following representation:
\begin{equation}\label{e:def_rc}
T(\omega) = \int_{\mathcal{M}} \omega(\tau,\theta)\, \mathrm{d}\Ha^k\,,
\end{equation}
where $\mathcal{M}$ is a $k$--rectifiable set on $\R^d$, $\tau \in \Lambda_k(\R^d)$ 
is a simple unit vector orienting the approximate tangent space to $\mathcal{M}$ and 
$\theta:\mathcal{M} \rightarrow \Z^m$ is the multiplicity. 
We denote such a current by $T=[\mathcal{M},\tau,\theta]$.
\end{defi}

\begin{rmk}[Mass of rectifiable $k$--currents with coefficients in $\Z^m$]
When $T$ is a rectifiable $k$--current with coefficients in $\Z^m$ 
and is represented as in~\eqref{e:def_rc}, one can check that
\begin{equation}\label{e:mass}
\mathbb{M}(T) = \int_{\mathcal{M}} \|\theta\|\,\mathrm{d}\Ha^k\,.
\end{equation}
\end{rmk}

As mentioned above, the setting of rectifiable currents 
with coefficients in $\Z^m$ with equi--bounded masses and masses of the boundaries is closed.
We refer to~\cite[Theorem 1.10]{MMT} for further details.

\subsection{Calibrations}\label{subsec:calibrations}

Following again~\cite{Mar_Mas1} we introduce  a notion of calibration
for the mass minimization problem for $1$--rectifiable currents with coefficient in $\mathbb{Z}^m$
with a prescribed boundary.

\begin{defi}[Calibration]\label{calibration}
Given $T = [\mathcal{M}, \tau,\theta]$ a 1--rectifiable current
with coefficients in $\Z^m$,
a \emph{calibration} for $T$ is a $1$--form 
$\omega \in C_c^\infty(\R^d, \Lambda_{\R^m}^1(\R^d))$ 
that satisfies the following properties:
\begin{itemize}
\item [i)] $d\omega = 0$;
\item [ii)] $\|\omega\|_{c} \leq 1$;
\item [iii)] $\omega(\tau(x),\theta(x)) = \|\theta(x)\|$ for every $x\in\mathcal{M}$.
\end{itemize}
\end{defi}

If $\omega$ is a calibration for $T=[\mathcal{M},\tau,\theta]$, then $T$ is a solution of the mass-minimization problem with prescribed boundary $\partial T$ (see~\cite[Proposition 3.2]
{Mar_Mas1}). Moreover, $T$ minimizes the mass among all 
1--currents with coefficients in $\R^m$ with the same boundary. 

\section{Oriented mailing problem}\label{WGW}

Let $S$ be a finite collection of points $\{p_1,\ldots,p_n\}\subset \R^d$.
We prescribe the ``amount of mass'' that has to be transported 
from $p_i$ to $p_j$ (and from $p_j$ to $p_i$) by 
a matrix $G=(g_{ij})_{ij}$, 
that is a matrix in $\N^{n\times n}$ 
where each entry $g_{ij}$ represents the mass flowing 
from the the point $p_i$ to the point $p_j$ for every $i\neq j$ (and we set  $g_{ii} = 0$).

The class of admissible transportation networks can be described as a finite family $\mathcal{F}$ of trajectories (namely oriented, simple, Lipschitz paths), characterized by the following property: for every  $(i,j)\in\{1,\dots,n\}\times\{1,\dots,n\}$, the family $\mathcal{F}$ contains exactly $g_{ij}$ paths, possibly repeated, $\gamma_{ij}^1,\dots,\gamma_{ij}^{g_{ij}}:[0,1]\to\R^d$, such that $\gamma_{ij}^{\ell}(0)=p_i$ and $\gamma_{ij}^{\ell}(1)=p_j$, for every $\ell=1,\dots,g_{ij}$. If the above property holds, we shortly say that $\mathcal{F}$ is \emph{compatible} with the given set $S$ and the matrix $G$.

We associate to the family $\mathcal{F}$ the 1-rectifiable set 
$$\Gamma_{\mathcal{F}}:=\bigcup_{(i,j)\in\{1,\dots,n\}^2}\bigcup_{\ell=1}^{g_{ij}}\{\gamma_{ij}^{\ell}([0,1])\}$$
and we endow $\Gamma_{\mathcal{F}}$ with an arbitrary orientation $\sigma$ (which is defined $\Ha^1$-a.e.). Let us trace the pointwise flow of the particles in both directions along the network by defining at $\Ha^1$-a.e. $x\in\Gamma_{\mathcal{F}}$ the pair
$(\theta_{\mathcal{F}}^-(x),\theta_{\mathcal{F}}^+(x))$, where
$$\theta_{\mathcal{F}}^\pm(x):=\Ha^0\{\gamma\in\mathcal{F}:{\mbox{sign}}(\langle\gamma'(x),\sigma(x)\rangle)=\pm 1\},$$
with the small abuse of notation of denoting by $\gamma'(x)$ the vector $\gamma'(t_x)$, where $\gamma(t_x)=x$; remember that the paths in $\mathcal{F}$ are simple, hence for every $\gamma\in\mathcal{F}$ and for $\Ha^1$-a.e. $x\in\Gamma_\mathcal{F}$ there exists at most one such time $t_x$. Heuristically, $\theta_{\mathcal{F}}^+(x)$ represents the total number of masses flowing at $x$ with the same orientation as $\sigma(x)$ and $\theta_{\mathcal{F}}^-(x)$ the number of those flowing in the opposite orientation.

In order to define the cost of the family $\mathcal{F}$, we fix $\alpha\in[0,1]$ and we consider $\phi$ a symmetric, monotone norm on $\R^2$ (i.e. $\phi(x,y)=\phi(y,x)$ and $\phi(x,y)\leq \phi(z,w)$, whenever $0\leq x\leq z$ and $0\leq y\leq w$) and we define  
\begin{equation}\label{e:energy_mailing_1}
\mathbb{E}^\phi_\alpha(\mathcal{F}):=\int_{\Gamma_{\mathcal{F}}}\phi(\theta^-_{\mathcal{F}}(x)^\alpha,\theta^+_{\mathcal{F}}(x)^\alpha) d\Ha^1(x).
\end{equation}
Observe that by the symmetry of $\phi$ the energy $\mathbb{E}^\phi_\alpha$ is well-defined, namely it does not depend on the orientation $\sigma$ chosen on $\Gamma_\mathcal{F}$.

We consider the following problem.
\begin{pro}[Oriented mailing problem - first version]\label{omp1}
Let $S=\{p_1,\ldots,p_n\}\subset \R^d$ and let $G$ be a matrix in $\N^{n\times n}$. Find a family $\tilde{\mathcal{F}}$ which is compatible with $S$ and $G$ such that
\begin{equation*}
\mathbb{E}^\phi_\alpha(\tilde{\mathcal{F}})=\inf\{\mathbb{E}^\phi_\alpha(\mathcal{F}) : \mathcal{F} \mbox{ is compatible with } S \mbox{ and } G\}\,.
\end{equation*}
\end{pro}

\begin{rmk}[Versatility of the cost functional]\label{rmk:models}
The energy that we defined is sufficiently flexible to fit several models. For example the choice $\alpha\in(0,1)$ and $\phi=\|\cdot\|_{p}$, with $p=\alpha^{-1}$, recovers the notion of \emph{$\alpha$--mass} typical of the classical (non--oriented) mailing problem. Instead, $\alpha\in(0,1)$ and $\phi=\|\cdot\|_1$ is particularly interesting to model the transportation of goods on trucks or post mails: indeed, in this case the energy of a two way road is simply the sum of the energy of each single road line, hence it is convenient to group different goods when they travel in the same direction, but there is no convenience in grouping goods flowing in opposite directions.
\end{rmk}

\begin{rmk}[Relation with previous models]\label{rmk:relations}
 Even though this problem is very realistic, the models which are currently available in the literature do not really fit for its description. Trying to describe it with the standard Lagrangian formulation for the mailing problem, for instance, one should at least modify the classical notion of uniform convergence of the trajectories, in order to justify the lower-semicontinuity of an energy which must keep track of the orientation of the curves. Instead, trying to describe it with genuinely real-valued currents, besides losing the information on the trajectories, one would also face cancellations due to the two directions of movement. This originated our idea to recast the problem in the \emph{hybrid} formulation of Problem \ref{notmassmin} employing currents with coefficients in groups.
\end{rmk}

\begin{rmk}[Continuous model]
We described Problem~\ref{omp1} only when the given datum is discrete. The mailing problem, instead, is naturally defined for general (possibly diffuse) measures. In this case one could approximate the problem with rescaled discrete problems for which we can define the convex relaxation described in Section 4. Clearly one expects that solutions to the discrete problems converge to a solution of the original problem. This is in general a delicate issue. For more details on this, 
we refer the reader to~\cite{Colombo2017, Colomboa, Colombob}. We believe that the continuous version of the oriented mailing problem can be rephrased in terms of rectifiable currents with coefficients in an infinite dimensional Banach space and a corresponding convex relaxation is available in a similar framework. However, this goes beyond the purposes of the present note. 
\end{rmk}

As we already mentioned in the introduction, it is an unsolved problem in the field to propose a model which permits to discriminate different directions, allowing different costs for two way roads and one way roads which have the same total amount of traffic (see \cite[Problem 15.9]{Bernot2009} and the subsequent discussion). In the present note, we tackle this problem in the natural discrete version, by showing the existence of solutions for Problem \ref{omp1}. In order to do so, we will recast Problem \ref{omp1} in the framework of the multi-material transport problems introduced in \cite{MMT}. 

 To this aim we consider a set $S$ and a matrix $G$ as above and for  $t=1,\ldots,n$ we introduce the matrices $G^t = (g_{ij}^t)_{ij}$, where
\begin{displaymath}
g^t_{ij} := \left\{
\begin{array}{ll} 
-g_{ij} & \mbox{ for }  i=t \\
g_{ij} & \mbox{ for } j=t \\
0 & \mbox{ in all other cases }
\end{array}
\right. 
\end{displaymath}
and we define the $0$-dimensional rectifiable current with coefficients in $\Z^{n \times n}$:
\begin{equation}\label{bordo_mailing_1}
B = \sum_{t=1}^n G^t \delta_{p_t}\,.
\end{equation}

\begin{rmk}
For each $t\in\{1,\ldots,n\}$, in the $t^{\rm th}$ row of the matrix $G^t$, we are keeping track of the mass which is flowing from $p_t$ towards the other points. More precisely, in the $j^{\rm th}$ entry of the $t^{\rm th}$ row one can find the amount of mass which is supposed to flow from $p_t$ to $p_j$. Similarly, in the $i^{\rm th}$ entry of the $t^{\rm th}$ column of $G^t$ one reads the incoming mass from $p_i$ to $p_t$. Since a current with coefficients in $\Z^{n\times n}$ can be regarded as an array of $n^2$ classical integral currents with different labels (see the definition of \emph{components of a current} in \cite{MMT}), then every rectifiable 1-current $T$ with coefficients in $\Z^{n\times n}$, which satisfies $\partial T=B$, is the superposition of $n^2$ labeled mass fluxes $T_{ij}$ (in the sense of \cite{Bran_Wirth17}). Each $T_{ij}$ transports the measure $g_{ij}\delta_{p_i}$ onto the measure $g_{ij}\delta_{p_j}$.
\end{rmk}

We consider the symmetric, monotone norm $\phi$ on $\R^2$ and the real number $\alpha \in [0,1]$ introduced above and we define the \emph{cost functional} $\mathcal{C}:\Z^{n\times n}\to \R$ as
$$
\mathcal{C}(\theta):=\phi\left(\Big( \sum_{\theta_{ij}>0}\theta_{ij}\Big)^\alpha
,\Big\vert \sum_{\theta_{ij}<0} \theta_{ij}\Big\vert^\alpha\right).
$$
For a $1$--rectifiable current with coefficients in $\Z^{n\times n}$ $T = [\mathcal{M}, \tau, \theta]$, with $\partial T=B$ we define the \emph{$(\alpha,\phi)$--energy} of $T$ as
\begin{equation}\label{wgwenergy}
\mathbb{E}^\phi_\alpha(T):=\int_{\mathcal{M}}\mathcal{C}(\theta)\,d\Ha^1\,.
\end{equation}
We consider the following problem.
\begin{pro}[Oriented mailing problem - second version]\label{notmassmin}
Let $B$ be as in~\eqref{bordo_mailing_1}. Find a $1$--rectifiable current $\widetilde{T}$ with coefficients in $\Z^{n\times n}$ such that $\partial\widetilde T=B$ and 
\begin{equation*}
\mathbb{E}^\phi_\alpha(\widetilde T)=\inf\{\mathbb{E}^\phi_\alpha(T) : T\;\text{is a $1$--rectifiable current with coefficients in}\;
 \Z^{n\times n}\ \text{and}\ \partial T= B\}\,.
\end{equation*}
\end{pro}
\begin{rmk}[Multi--material transport problems]
Observe that $\mathcal{C}$ is a multi-material cost, in the sense of \cite[Definition 2.1]{MMT}, namely it is an even, increasing and subadditive function of $\theta$. Hence Problem~\ref{notmassmin} is a multi--material transport problem in the sense of \cite{MMT}.
\end{rmk}

The existence of solutions to Problem~\ref{notmassmin} is proved in \cite[Theorem 2.3]{MMT}
and it is obtained via the direct method in the Calculus of Variation: the proof is a standard application of the Closure Theorem for classical integral currents and the lower semicontinuity 
of the energy $\mathbb{E}^\phi_\alpha$ (see~\cite{Mar_Mas_Stu_Ti,Colombo2017a}).

As a consequence, the existence of a solution to Problem \ref{omp1} follows from its equivalence with Problem \ref{notmassmin}, which is stated below.

\begin{thm}[Equivalence between Problem \ref{omp1} and Problem \ref{notmassmin}]\label{equivalenceompnot}
There is a canonical way to associate to a set $S=\{p_1,\dots,p_n\}\subset\R^d$ and a matrix $G\in\N^{n\times n}$ a boundary $B$ as in~\eqref{bordo_mailing_1} such that the following holds. Given a family $\mathcal{F}$ compatible with $S$ and $G$, which is a minimizer of Problem~\ref{omp1}, one can construct a current $T_{\mathcal{F}}=[E,\tau,\theta]$, which is a minimizer of Problem~\ref{notmassmin} for the boundary $B$. Conversely, given a current $T=[E,\tau,\theta]$ which is a minimizer of Problem~\ref{notmassmin} for the boundary $B$, one can construct a family $\mathcal{F}_T$ which is a solution to Problem~\ref{omp1} associated to the set $S$ and the matrix $G$. Moreover 
the minimal values of Problem~\ref{omp1} and Problem~\ref{notmassmin} are the same.
\end{thm}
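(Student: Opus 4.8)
The plan is to construct an explicit bijective-up-to-energy correspondence between families $\mathcal{F}$ and currents $T$ with $\partial T = B$, and to verify that this correspondence preserves the value of the energy $\mathbb{E}^\phi_\alpha$ pointwise. I would first make the construction $\mathcal{F}\mapsto T_{\mathcal{F}}$ precise. Given a family $\mathcal{F}$ compatible with $S$ and $G$, I take the rectifiable set $E:=\Gamma_{\mathcal{F}}$ with the arbitrary orientation $\sigma$, and I define the $\Z^{n\times n}$-valued multiplicity $\theta$ at $\Ha^1$-a.e. $x\in E$ so that its $(i,j)$-entry records the signed count of paths $\gamma_{ij}^\ell$ passing through $x$, where a path contributes $+1$ if $\mathrm{sign}(\langle(\gamma_{ij}^\ell)'(x),\sigma(x)\rangle)=+1$ and $-1$ otherwise. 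The key bookkeeping observation is that with this definition, for the boundary one has $\partial T_{\mathcal{F}}=\sum_{t}G^t\delta_{p_t}=B$: each path $\gamma_{ij}^\ell$ contributes $\pm m_e$ at its endpoints, and summing over all paths reproduces exactly the construction of the matrices $G^t$ (the $t$-th matrix collecting outgoing mass as $-g_{ij}$ in row $t$ and incoming mass as $+g_{ij}$ in column $t$). This is the place where I must be careful that the sign conventions in the definition of $B$ in~\eqref{bordo_mailing_1} match the orientation conventions for $\partial T$.

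The heart of the argument is the pointwise identity $\mathcal{C}(\theta(x))=\phi(\theta_{\mathcal{F}}^-(x)^\alpha,\theta_{\mathcal{F}}^+(x)^\alpha)$ for $\Ha^1$-a.e.\ $x$, which immediately gives $\mathbb{E}^\phi_\alpha(T_{\mathcal{F}})=\mathbb{E}^\phi_\alpha(\mathcal{F})$ by integrating over $E=\Gamma_{\mathcal{F}}$. To see this, I would observe that at a point $x$ the positive entries $\theta_{ij}(x)>0$ count precisely the paths crossing $x$ in agreement with $\sigma(x)$, so $\sum_{\theta_{ij}>0}\theta_{ij}=\theta_{\mathcal{F}}^+(x)$, and similarly $|\sum_{\theta_{ij}<0}\theta_{ij}|=\theta_{\mathcal{F}}^-(x)$. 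Here it is essential that within a single entry $(i,j)$, all the $g_{ij}$ paths go from $p_i$ to $p_j$ and hence traverse $x$ with a coherent sign relative to $\sigma(x)$, so there is no internal cancellation within an entry that would corrupt the count; this uses that the paths are simple so each passes through $x$ at most once. Plugging into the definition of $\mathcal{C}$ yields the claimed equality.

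For the converse direction $T\mapsto\mathcal{F}_T$, I would invoke a decomposition of the current into paths. Since $\partial T=B$ and, as noted in the remark, $T$ splits into $n^2$ labeled classical integral $1$-currents $T_{ij}$ with $\partial T_{ij}=g_{ij}\delta_{p_j}-g_{ij}\delta_{p_i}$, I apply the classical structure theorem for $1$-dimensional integral currents (decomposition into simple oriented Lipschitz curves, cf.\ the Lagrangian representation used in branched transport) to write each $T_{ij}$ as a superposition of $g_{ij}$ curves from $p_i$ to $p_j$. Collecting these curves over all $(i,j)$ produces a family $\mathcal{F}_T$ compatible with $S$ and $G$, and running the previous pointwise identity backwards gives $\mathbb{E}^\phi_\alpha(\mathcal{F}_T)\le\mathbb{E}^\phi_\alpha(T)$, since the multiplicity of $\mathcal{F}_T$ at each point is dominated by the entries of $\theta$. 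Combining the two constructions shows the two infima coincide and that minimizers correspond to minimizers.

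I expect the main obstacle to be the converse decomposition step: extracting an \emph{admissible} family $\mathcal{F}_T$ from an arbitrary minimizing current is delicate because the minimizer $T$ may a priori contain cycles or superfluous pieces whose multiplicities partially cancel, so that the naive path decomposition overcounts and the inequality $\mathbb{E}^\phi_\alpha(\mathcal{F}_T)\le\mathbb{E}^\phi_\alpha(T)$ could fail on a set of positive measure. The resolution is that at a minimizer such cancellations cannot be energetically favorable — by the monotonicity of $\phi$ and the strict subadditivity coming from $\alpha<1$ one can remove any cycle or opposing pair in a component without increasing the energy — so one may assume $T$ is \emph{acyclic} within each label before decomposing. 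Making this reduction rigorous, and checking that the resulting curves indeed realize $\theta_{\mathcal{F}_T}^\pm=\sum_{\theta_{ij}\gtrless 0}|\theta_{ij}|$ with equality $\Ha^1$-a.e., is the technically demanding part of the proof; the forward direction is comparatively routine once the sign conventions are fixed.
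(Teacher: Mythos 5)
Your overall architecture --- map a family to a current by summing labeled curve--currents, map a current back to a family by decomposing it into labeled Lipschitz curves, and compare energies in the two directions --- is the same as the paper's. However, your forward step contains a genuine error. You claim the pointwise identity $\mathcal{C}(\theta(x))=\phi\bigl(\theta^-_{\mathcal{F}}(x)^\alpha,\theta^+_{\mathcal{F}}(x)^\alpha\bigr)$ and justify it by asserting that all $g_{ij}$ paths carrying the same label $(i,j)$ ``traverse $x$ with a coherent sign relative to $\sigma(x)$.'' This is false: two simple paths both running from $p_i$ to $p_j$ can pass through a common interior point $x$ with opposite tangent directions (one of them making a detour), in which case their contributions to the \emph{single} entry $\theta_{ij}(x)$ cancel, and $|\theta_{ij}(x)|$ is strictly smaller than the number of label-$(i,j)$ paths through $x$. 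This is precisely what formula \eqref{spezzamento} records: $\theta_{ij}(x)$ is the \emph{difference} of the counts $\Ha^0(\mathcal{F}^+_{ij}(x))$ and $\Ha^0(\mathcal{F}^-_{ij}(x))$, not their sum. Consequently one only gets $\sum_{\theta_{ij}>0}\theta_{ij}\le\theta^{\pm}_{\mathcal{F}}(x)$ and $\bigl|\sum_{\theta_{ij}<0}\theta_{ij}\bigr|\le\theta^{\mp}_{\mathcal{F}}(x)$ (with the roles of $\pm$ fixed by the sign of $\langle\tau(x),\sigma(x)\rangle$), whence, by monotonicity of $\phi$, the inequality $\mathbb{E}^\phi_\alpha(T_{\mathcal{F}})\le\mathbb{E}^\phi_\alpha(\mathcal{F})$ rather than equality. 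Fortunately the inequality is all that is needed --- it combines with the reverse inequality from the converse direction to give equality of the infima and the correspondence of minimizers --- so the slip is repairable, but what you present as the ``heart of the argument'' is not correct as stated.

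In the converse direction you correctly identify the difficulty (cycles and intra-label cancellations), but your proposed remedy --- arguing that at a minimizer cancellations cannot be energetically favorable, via ``strict subadditivity coming from $\alpha<1$'' --- is both vaguer and narrower than what is actually needed. First, the paper allows $\alpha\in[0,1]$, so $\alpha=1$ is admissible and strict subadditivity is unavailable there. Second, the paper does not argue via minimality at all: it invokes a structure theorem (\cite[Theorem 3.2]{Mar_Mas2}) which decomposes \emph{any} competitor $T$ into a cycle plus, for each $(i,j)$, exactly $g_{ij}$ simple open curves from $p_i$ to $p_j$ with multiplicity $E_{ij}$ that are $\Ha^1$-a.e.\ coherently oriented where they overlap; the cycle is then discarded without increasing the energy by monotonicity of the cost, and the coherent orientation within each label is what guarantees $\mathbb{E}^\phi_\alpha(\mathcal{F}_T)=\mathbb{E}^\phi_\alpha(T')\le\mathbb{E}^\phi_\alpha(T)$ via \eqref{spezzamento}. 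Your appeal to ``the classical structure theorem'' applied label by label does not by itself deliver this coherence property, and without it the reconstructed family could have $\theta^{\pm}_{\mathcal{F}_T}$ strictly larger than the sums of the entries of $\theta$, breaking the required inequality. You should either quote the coherent decomposition result explicitly or supply the cycle-removal argument in full; as sketched, this step is a gap.
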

\begin{proof}
The relation between $S$, $G$ and $B$ is defined by \eqref{bordo_mailing_1}. We divide the proof in two steps.

{\emph{Step 1.}} Consider a competitor $\mathcal{F}$ for the minimization Problem \ref{omp1} and fix an orientation $\sigma$ on $\Gamma_{\mathcal{F}}$. We want to associate to $\mathcal{F}$ a current $T_\mathcal{F}$ which is a competitor for Problem~\ref{notmassmin} for the boundary $B$, such that $\mathbb{E}^\phi_\alpha(T_\mathcal{F})\leq\mathbb{E}^\phi_\alpha(\mathcal{F})$. To this regard, for every $\gamma\in\mathcal{F}$ with $\gamma(0)=p_i$ and $\gamma(1)=p_j$, denote by $[\gamma]$ the 1-current with coefficients in $\Z^{n\times n}$ given by $\left[\gamma[0,1],\frac{\gamma'}{|\gamma'|}, E_{ij}\right]$, where $E_{ij}:=(\delta_{ij})_{\iota \kappa}$. Denote $T_\mathcal{F}=[E,\tau,\theta]$ the current $T_\mathcal{F}:=\sum_{\gamma\in\mathcal{F}}[\gamma]$. Since for every $\gamma$ we have $\partial[\gamma]=-\delta_{p_i}E_{ij}+\delta_{p_j}E_{ij}$, it is immediate to check that $\partial T_\mathcal{F}=B$. Moreover, $T_\mathcal{F}$ is supported on (a subset of) $\Gamma_\mathcal{F}$, hence in view of the monotonicity of $\phi$, in order to guarantee that $\mathbb{E}^\phi_\alpha(T_\mathcal{F})\leq\mathbb{E}^\phi_\alpha(\mathcal{F})$ it is sufficient to check that for $\Ha^1$-a.e. $x\in\Gamma_\mathcal{F}$ it holds: 
$$\theta_\mathcal{F}^-(x)\geq \left|\sum_{\theta_{ij}(x)<0}\theta_{ij}(x)\right| \mbox{ and } \theta_\mathcal{F}^+(x)\geq \sum_{\theta_{ij}(x)>0}\theta_{ij}(x)$$
if $\langle\tau(x),\sigma(x)\rangle>0$ (recall that $\sigma$ is a fixed orientation of $\Gamma_\mathcal{F}$) and otherwise
$$\theta_\mathcal{F}^-(x)\geq \sum_{\theta_{ij}(x)>0}\theta_{ij}(x) \mbox{ and } \theta_\mathcal{F}^+(x)\geq \left|\sum_{\theta_{ij}(x)<0}\theta_{ij}(x)\right|.$$
To check the latter property observe that for every $(i,j)$ and for $\Ha^1$-a.e. $x\in\Gamma_{\mathcal{F}}$, we have 
\begin{equation}\label{spezzamento}
\theta_{ij}(x)= {\mbox{sign}}(\langle \sigma(x),\tau(x)\rangle) \left(\Ha^0(\mathcal{F}_{ij}^+(x))-\Ha^0(\mathcal{F}_{ij}^-(x))\right),
\end{equation}
where
$$\mathcal{F}_{ij}^\pm(x):=\{\gamma\in\mathcal{F}:\gamma(0)=p_i, \gamma(1)=p_j, {\mbox{ and }} x\in\gamma([0,1]) {\mbox { with}}{\mbox{ sign}}(\langle\gamma'(x),\sigma(x)\rangle)=\pm 1\}.$$
On the other hand, for $\Ha^1$-a.e. $x\in\Gamma_{\mathcal{F}}$, it holds
$$\theta_\mathcal{F}^\pm(x)=\sum_{i,j}\Ha^0(\mathcal{F}_{ij}^\pm(x)).$$
{\emph{Step 2.}} Consider a competitor $T$ for the minimization Problem \ref{notmassmin}. We want to associate to $T$ a family $\mathcal{F}_T$ which is compatible with $S$ and $G$, such that $\mathbb{E}^\phi_\alpha(\mathcal{F}_T)\leq\mathbb{E}^\phi_\alpha(T)$. By \cite[Theorem 3.2]{Mar_Mas2} we can write $T$ as a sum of a cycle plus finitely many 1-currents associated to simple, open, Lipschitz curves with some multiplicities $E_{ij}$. Moreover for every fixed $(i,j)$ the number of open curves with multiplicity $E_{ij}$ is precisely $g_{ij}$, they all go from $p_i$ to $p_j$ and they have $\Ha^1$-a.e. the same orientation when they intersect.

Additionally, by \cite[Formula (3.3)]{Mar_Mas2} it is easy to observe that the 1-current $T'$ obtained by summing over $(i,j)$ all those open curves (and hence neglecting the remaining cycles) satisfy $\mathbb{E}^\phi_\alpha(T')\leq\mathbb{E}^\phi_\alpha(T)$. If we consider the family $\mathcal{F_T}$ obtained as a union of the corresponding open curves, it follows that $\mathcal{F}_T$ is compatible with $S$ and $G$. In view of \eqref{spezzamento}, the fact that all curves going from $p_i$ to $p_j$ have $\Ha^1$-a.e. the same orientation when they intersect guarantees additionally that for $\Ha^1$-a.e. $x$ and for every $(i,j)$ at most one between $\mathcal{F}_{ij}^+(x)$ and $\mathcal{F}_{ij}^-(x)$ is non empty and therefore $\mathbb{E}^\phi_\alpha(T')=\mathbb{E}^\phi_\alpha(\mathcal{F}_T)$, which concludes the proof.
\end{proof}
\begin{cor}
The Problem \ref{omp1} admits a solution. 
\end{cor}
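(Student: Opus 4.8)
The plan is to obtain a solution of Problem~\ref{omp1} as the image, under the construction of Theorem~\ref{equivalenceompnot}, of a minimizer of Problem~\ref{notmassmin}, whose existence is already available in the literature. In other words, the corollary is a direct consequence of combining the existence result for the current formulation with the equivalence between the two problems just established.

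First I would invoke \cite[Theorem 2.3]{MMT}: since $\mathcal{C}$ is an even, increasing and subadditive function of $\theta$, Problem~\ref{notmassmin} is a multi--material transport problem, and as recalled above its minimizers exist via the direct method (compactness from the Closure Theorem together with the lower semicontinuity of $\mathbb{E}^\phi_\alpha$). Concretely, let $\widetilde{T}=[E,\tau,\theta]$ be a $1$--rectifiable current with coefficients in $\Z^{n\times n}$ satisfying $\partial\widetilde{T}=B$ and realizing the infimum in Problem~\ref{notmassmin}, where $B$ is associated to $S$ and $G$ as in~\eqref{bordo_mailing_1}.

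Next I would apply the ``conversely'' part of Theorem~\ref{equivalenceompnot} (that is, Step~2 of its proof) to $\widetilde{T}$, producing a family $\mathcal{F}_{\widetilde{T}}$ which is compatible with $S$ and $G$ and which satisfies $\mathbb{E}^\phi_\alpha(\mathcal{F}_{\widetilde{T}})\leq\mathbb{E}^\phi_\alpha(\widetilde{T})$. It then remains only to verify that $\mathcal{F}_{\widetilde{T}}$ is in fact optimal, and not merely a competitor. This is where I would use the final assertion of Theorem~\ref{equivalenceompnot}, namely that the minimal values of Problem~\ref{omp1} and Problem~\ref{notmassmin} coincide: since $\widetilde{T}$ attains the infimum of Problem~\ref{notmassmin}, the chain $\inf(\text{Problem~\ref{omp1}})\leq\mathbb{E}^\phi_\alpha(\mathcal{F}_{\widetilde{T}})\leq\mathbb{E}^\phi_\alpha(\widetilde{T})=\inf(\text{Problem~\ref{notmassmin}})=\inf(\text{Problem~\ref{omp1}})$ forces all inequalities to be equalities, so $\mathcal{F}_{\widetilde{T}}$ realizes the infimum of Problem~\ref{omp1}.

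Since essentially all the substance is already packaged into Theorem~\ref{equivalenceompnot} and the cited existence result, I do not expect a genuine obstacle in this step. The only point that deserves a word of care is precisely the last one: ruling out that the passage from $\widetilde{T}$ to $\mathcal{F}_{\widetilde{T}}$ could produce a strictly suboptimal family. That possibility is excluded exactly by the equality of the two infima, which is why the optimality transfers cleanly from the current back to the family of trajectories.
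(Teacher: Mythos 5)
Your proposal is correct and follows exactly the paper's (implicit) argument: existence for Problem~\ref{notmassmin} via \cite[Theorem 2.3]{MMT}, then transfer of the minimizer back to a compatible family through the ``conversely'' direction of Theorem~\ref{equivalenceompnot}, with optimality guaranteed by the equality of the two infima. Nothing is missing.
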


\section{Convex relaxation}\label{s:conv}
In this section we show how to rephrase the Problem \ref{notmassmin} as a mass--minimization problem, that is, the minimization of a convex functional. This is described in~\cite[Theorem 2.4]{MMT}.
In the following we only explain how to define the mass--minimization problem associated to Problem~\ref{notmassmin}, and we refer the reader to~\cite{MMT} for the proof of the equivalence.

Let $N:=\sum_{(i,j)}g_{ij}$. We will define as boundary datum a 0--current $\mathcal{B}$ with coefficients in $\Z^N$. Firstly we choose an ordering for the pairs $(i,j)\in\{1,\dots,n\}\times\{1,\dots,n\}$. In order to keep the notation short, we denote by $I_1,\dots, I_{n^2}$ such pairs. For every $I=(i,j)$ we denote $\mu_I$ the signed atomic measure $\mu_I:=\delta_{p_j}-\delta_{p_i}$. Then we associate to every $I=I_1,\dots, I_{n^2}$ an element $\theta_I$ of $\Z^N$ as follows. For $I=(i,j)$, we denote $g_I:=g_{ij}$. We take $\theta_{I_1}$ the sum of the first $g_{I_1}$ elements of the basis $(e_1,\dots,e_N)$, then we take $\theta_{I_2}$ the sum of the next $g_{I_2}$ elements of the basis, and so on\dots Lastly, we define
\begin{equation}\label{bordo_mailing_2}
\mathcal{B}:=\sum_{\ell=1}^{n^2}\theta_{I_\ell}\mu_{I_\ell}.
\end{equation}
 Now we define the following monotone norm on $\R^N$, where we denote $p:=\frac{1}{\alpha}$:
$$\|(t_1,\dots,t_N)\|_{\phi,\alpha}:=\phi\left(\Big\|\sum_{t_\iota>0}t_\iota e_\iota\Big\|_{\ell^p},\Big\|\sum_{t_\iota<0}t_\iota e_\iota\Big\|_{\ell^p}\right).$$
We consider the following problem, where the mass of a current with coefficients in $\Z^{N}$ is computed with respect to the norm $\|\cdot\|_{\phi,\alpha}$.
\begin{pro}[Convex relaxation of the oriented mailing problem]\label{massmin}
Let $\mathcal{B}$ be as in~\eqref{bordo_mailing_2}. Find a $1$--rectifiable current $\widetilde{R}$ with coefficients in $\Z^{N}$ such that $\partial\widetilde R=\mathcal{B}$ and 
\begin{equation*}
\mathbb{M}(\widetilde R)=\inf\{\mathbb{M}(R) : R\;\text{is a $1$--rectifiable currents with coefficients in}\;
 \Z^{N}\ \text{and}\ \partial R= \mathcal B\}\, .
\end{equation*}
\end{pro}

\begin{rmk}[A key property of the norm]\label{rem:normcost}
Observe that if $|t_\iota|\in\{0,1\}$ for every $\iota$, then 
$$\|(t_1,\dots,t_N)\|_{\phi,\alpha}=\phi(\sharp\{\iota:t_\iota=1\}^\alpha,\sharp\{\iota:t_\iota=-1\}^\alpha).$$
Notice that this implies the validity of~\cite[(3.1)]{MMT}, which is the crucial identity to ensure the validity of the forthcoming Theorem~\ref{equi}, stating the equivalence between Problem~\ref{notmassmin} and Problem~\ref{massmin}. Observe that it is not necessary to verify that $\mathcal{C}$ satisfies property $(iii)'$ of \cite[Definition 2.1]{MMT}. Indeed such property was used only in \cite[Theorem 3.2]{MMT} to prove the existence of a monotone norm satisfying~\cite[(3.1)]{MMT}, for which we already gave an explicit formula.
\end{rmk}

\begin{thm}[Equivalence between Problem~\ref{notmassmin} and Problem~\ref{massmin}]\label{equi}
There is a canonical way to associate to a boundary $B$ as in~\eqref{bordo_mailing_1} a boundary $\mathcal{B}$ as in~\eqref{bordo_mailing_2} such that the following holds. For every $T=[E,\tau,\theta]$, minimizer of Problem~\ref{notmassmin} for the boundary $B$, there is a canonical current $R_T=[E',\tau',\theta']$ which is a minimizer of Problem~\ref{massmin} for the boundary $\mathcal{B}$. Conversely, for every $R=[E',\tau',\theta']$ minimizer of Problem~\ref{massmin} for the boundary $\mathcal{B}$, there is a canonical current $T_R=[E,\tau,\theta]$ which is a minimizer of Problem~\ref{massmin} for the boundary $B$. Moreover it holds $\Ha^1(E\triangle E')=0$ and $\mathcal{C}(\theta(x))=\|\theta'(x)\|_{\phi,\alpha}$ for $\Ha^1$-a.e. $x\in E$. In particular the minimal values of Problem~\ref{notmassmin} and Problem~\ref{massmin} are the same.
\end{thm}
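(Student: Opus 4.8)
The plan is to deduce the statement from the general equivalence result \cite[Theorem 2.4]{MMT}, so that the real task reduces to exhibiting the canonical correspondence between the two boundary data and to checking the hypotheses of that theorem for the explicit cost $\mathcal{C}$ and the explicit norm $\|\cdot\|_{\phi,\alpha}$.

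First I would set up the linear projection. Consider the group homomorphism $\pi\colon\Z^N\to\Z^{n\times n}$ sending each basis vector $e_\iota$ of the block associated to the pair $I_\ell=(i,j)$ (i.e.\ the block that sums to $\theta_{I_\ell}$ in the construction of~\eqref{bordo_mailing_2}) to the matrix $E_{ij}$; this homomorphism induces a map on currents by postcomposition on the multiplicity. Applying it to the $0$--current $\mathcal{B}=\sum_\ell\theta_{I_\ell}\mu_{I_\ell}$ and using $\pi(\theta_{I_\ell})=g_{ij}E_{ij}$ together with $\mu_{I_\ell}=\delta_{p_j}-\delta_{p_i}$, one gets $\pi(\mathcal{B})=\sum_{(i,j)}g_{ij}E_{ij}(\delta_{p_j}-\delta_{p_i})$. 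Collecting the terms of the matrices $G^t$ in~\eqref{bordo_mailing_1} shows that this coincides with $B$, which is the canonical association of $\mathcal{B}$ to $B$.

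Next I would verify the two hypotheses required to invoke \cite[Theorem 2.4]{MMT} through the projection $\pi$. That $\mathcal{C}$ is a multi--material cost has already been recorded above, so it remains to produce a monotone norm on $\Z^N$ satisfying the compatibility identity~\cite[(3.1)]{MMT} with $\mathcal{C}$, for which I take $\|\cdot\|_{\phi,\alpha}$. Monotonicity is immediate from the monotonicity of $\phi$ and of the $\ell^p$ norm. The compatibility identity is exactly the content of Remark~\ref{rem:normcost}: on vectors $(t_1,\dots,t_N)$ with entries in $\{0,\pm1\}$ one has $\|(t_1,\dots,t_N)\|_{\phi,\alpha}=\phi(\sharp\{\iota:t_\iota=1\}^\alpha,\sharp\{\iota:t_\iota=-1\}^\alpha)$, and this agrees with $\mathcal{C}(\pi(t_1,\dots,t_N))$ precisely on those configurations in which every block carries units of a single sign, which is the situation realized by the canonical lifts in view of the coherent--orientation structure from \cite[Theorem 3.2]{Mar_Mas2}. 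As emphasized in Remark~\ref{rem:normcost}, because the norm is given by an explicit formula one does not need to verify property $(iii)'$ of \cite[Definition 2.1]{MMT}, whose sole purpose was to guarantee abstractly the existence of such a norm.

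With both hypotheses in force, \cite[Theorem 2.4]{MMT} applies and delivers everything simultaneously: to each minimizer $T=[E,\tau,\theta]$ of Problem~\ref{notmassmin} it assigns a canonical minimizer $R_T=[E',\tau',\theta']$ of Problem~\ref{massmin} and conversely, with $\Ha^1(E\triangle E')=0$ and the pointwise identity $\mathcal{C}(\theta(x))=\|\theta'(x)\|_{\phi,\alpha}$ for $\Ha^1$--a.e.\ $x$; integrating this identity over $E$ gives $\mathbb{E}^\phi_\alpha(T)=\mathbb{M}(R_T)$ and hence the equality of the minimal values. I expect the genuine obstacle to be the compatibility identity~\cite[(3.1)]{MMT}: one must exclude that within a block of coordinates of $\theta'$ entries of opposite sign occur, since such cancellations would make the mass on $\Z^N$ strictly smaller than $\mathcal{C}$ of the projection and would destroy the pointwise equality. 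This is exactly the point where the canonical nature of the lift, together with the decomposition into simple curves with $\Ha^1$--a.e.\ coherent orientations of \cite[Theorem 3.2]{Mar_Mas2} already used in the proof of Theorem~\ref{equivalenceompnot}, becomes indispensable.
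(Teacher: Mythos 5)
Your proposal is correct and follows essentially the same route as the paper, which states no proof of its own but defers entirely to \cite[Theorem 2.4]{MMT} after constructing $\mathcal{B}$, the norm $\|\cdot\|_{\phi,\alpha}$, and observing (Remark~\ref{rem:normcost}) that the compatibility identity \cite[(3.1)]{MMT} holds while property $(iii)'$ need not be checked. Your explicit projection $\pi$ with $\pi(\mathcal{B})=B$ and your identification of the sign--coherence within blocks as the crux of \cite[(3.1)]{MMT} simply make explicit what the paper leaves to the cited reference.
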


\begin{rmk}[Calibrations for the oriented mailing problem] Reformulating Problem \ref{notmassmin} as a mass minimization problem allows to introduce a related notion of calibration as described in Section \ref{subsec:calibrations}. This gives a useful tool to certify the minimality of a given candidate minimizer. It also provides a numerical method to estimate the energy gap between any competitor and a minimizer, as one can see in \cite[Definition 2.9 and Proposition 2.11]{Mas_Ou_Ve}. We refer to \cite{Mas_Ou_Ve} also for the details and the description of the numerical implementation.
\end{rmk}

\section{Partitioned Steiner tree problem}\label{Steiner}

We consider a finite family $S:= \{p_1,\ldots,p_n\}$ of points in $\R^d$ 
and a partition of $S$ denoted by $S_1,\ldots,S_k$. Without loss of generality we assume that in the partition there are no singletons.
We define the following problem, which we call the \emph{partitioned Steiner tree problem}
\begin{pro}\label{Steinertype}
Find a compact set $K$ of minimal $\Ha^1$--measure that contains $S$
and such that  for $i\in\{1,\ldots,k\}$ the points of $S_i$ 
are in the same connected component of $K$.  
\end{pro}

If $K$ is a competitor for Problem~\ref{Steinertype},
then the number of its connected components is bounded from above by $k$. 
Then it is not difficult to prove existence of minimizers by
a direct method in the Calculus of Variations:
compactness comes from Blaschke Selection Theorem and
lower semicontinuity of the $1$--dimensional Hausdorff measure 
for sequences of compact sets with an equi--bounded number of connected components comes from Go\l \k{a}b semicontinuity theorem (see~\cite{falconer}).

\begin{rmk} 
Notice that the solutions of Problem~\ref{Steinertype} are union of $h$ disjoint minimal Steiner trees ($h\leq k$) connecting some sets $\mathscr{S}_j$, $j=1\dots,h$, where $\mathscr{S}_1,\dots, \mathscr{S}_h$ is a partition of $S$ with the property that each $S_i$ ($i=1,\dots,k$) is contained in some $\mathscr{S}_j$. Nevertheless the partition $\mathscr{S}_1,\dots,\mathscr{S}_h$ 
(and therefore the number of connected components of a minimizer $K$) is not known a priori; 
it depends on the relative positions of the points of $S$.
\end{rmk}

We rephrase now Problem~\ref{Steinertype} as a mass minimization problem among 
a family of $1$--rectifiable currents with coefficients in $\Z^{n-k}$. We set $n_0:=0$ and for $i\in\{1,\ldots,k\}$ we denote by $n_i$ the cardinality of $S_i$. Up to reordering, we may assume that $S_1=\{p_1,\dots,p_{n_1}\}$, $S_2=\{p_{n_1+1},\dots,p_{n_1+n_2}\}$, and so on\ldots 
For $i\in\{1,\ldots,k\}$, we denote $I_i:=[a_i,b_i]\cap\N$, where
$$a_i:=n_0+\dots+n_{i-1}+1,\quad b_i:=n_0+\dots+n_i-1.$$
We denote $(e_\iota)_{\iota=1,\ldots,n-k}$ the canonical basis of $\R^{n-k}$. 
For $\ell=1,\dots,n$ we define  
\begin{equation}\label{gi}
g_\ell := \left\{
\begin{array}{ll} 
 e_{\ell-i+1} & \mbox{if } \ell\in I_i, {\mbox{ for some }} i\in\{1,\dots,k\}\\
-\sum_{\iota= a_i}^{b_i} e_\iota & \mbox{if } \ell = b_i+1,{\mbox{ for some }} i\in\{1,\dots,k\}\,.
\end{array}
\right. 
\end{equation}
We set 
\begin{equation*}
B = \sum_{\ell=1}^n g_\ell \delta_{p_\ell}\,.
\end{equation*}

We want to solve the following minimization problem, where the mass of a current with coefficients in $\Z^{n-k}$ is computed with respect to the $\ell^\infty$--norm on $\R^{n-k}$.
 
\begin{pro}\label{massminpro}
Find a $1$--rectifiable current $\bar{T}$ with coefficients in $\Z^{n-k}$ such that $\partial\bar T=B$ and 
\begin{equation*}
\mathbb{M}(\bar T)=\inf\{\mathbb{M}(T) : T\;\text{is a $1$--rectifiable current with coefficients in}\;
 \Z^{n-k}\,\text{and}\, \partial T= B\}\, .
\end{equation*}
\end{pro}

The existence of minimizers for Problem~\ref{massminpro}
is granted by a direct method in the Calculus of Variations:
the lower semicontinuity of the mass is an immediate consequence of its definition and the compactness in the space of 1--rectifiable currents with coefficients in $\Z^{n-k}$ is a simple consequence of the Closure Theorem for classical integral currents (see~\cite{Mar_Mas1}  and \cite[Theorem 1.10]{MMT} for the details).

\medskip

It is then possible to prove the following equivalence result:
\begin{thm}\label{equi_St}
Given a set $K$ which is a minimizer for Problem~\ref{Steinertype}, 
there exists a $1$--rectifiable current $T_K=[K,\tau_K,\theta_K]$ with coefficients in $\Z^{n-k}$ with $\|\theta_K\|=1$ $\mathcal{H}^1$--a.e. on $K$ and $T$ is minimizer for Problem~\ref{massminpro}. On the other hand, given $T = [\mathcal{M},\tau,\theta]$ a mass minimizing current for Problem~\ref{massminpro}, its support is a minimizer for Problem~\ref{Steinertype}.
In particular $\mathbb{M}(T)=\mathcal{H}^1(K)$.
\end{thm}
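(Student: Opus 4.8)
The plan is to mimic the scheme of~\cite{Mar_Mas1}, exploiting the fact that the mass in Problem~\ref{massminpro} is computed with the $\ell^\infty$--norm on $\Z^{n-k}$ together with the special structure of the boundary $B$ defined through~\eqref{gi}. The basic observation is a mass--length inequality: by~\eqref{e:mass}, for any competitor $R=[\mathcal{M},\tau,\theta]$ of Problem~\ref{massminpro} one has $\|\theta(x)\|_{\ell^\infty}\ge 1$ for $\mathcal{H}^1$--a.e.\ $x\in\mathcal{M}$, since $\theta(x)\in\Z^{n-k}\setminus\{0\}$ there; hence $\mathbb{M}(R)\ge\mathcal{H}^1(\mathcal{M})$, with equality if and only if $\|\theta\|_{\ell^\infty}=1$ $\mathcal{H}^1$--a.e. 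This is exactly what will force mass to coincide with length on a good current.

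The second ingredient is a connectivity lemma: the support of any competitor $R$ with $\partial R=B$ connects the points of each $S_i$. A current with coefficients in $\Z^{n-k}$ splits into its $n-k$ real components $R^{(\iota)}$, each a classical integral $1$--current, and by the construction of $B$ the component indexed by the coordinate associated to a non--last point $p_\ell\in S_i$ has boundary $\delta_{p_\ell}-\delta_{p_{b_i+1}}$. Decomposing this integral $1$--current into simple Lipschitz curves and cycles (as in \cite[Theorem 3.2]{Mar_Mas2}), the boundary condition forces at least one curve joining $p_{b_i+1}$ to $p_\ell$ inside $\mathrm{supp}(R^{(\iota)})\subseteq\mathrm{supp}(R)$. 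Thus every point of $S_i$ lies in the connected component of $p_{b_i+1}$, so $\mathrm{supp}(R)$, once known to be compact, is an admissible set for Problem~\ref{Steinertype}, and therefore $\mathcal{H}^1(\mathrm{supp}(R))\ge\mathcal{H}^1(K)$ for every minimizer $K$ of Problem~\ref{Steinertype}.

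For the first assertion I would start from a minimizer $K$ and produce $T_K$. Being a minimizer, $K$ is a finite union of trees with no loops; in each connected component, for every non--last point $p_\ell\in S_i$ there is a unique path joining $p_\ell$ to $p_{b_i+1}$, which I orient so that, loading the associated coordinate with multiplicity $\pm1$, its boundary equals $\delta_{p_\ell}-\delta_{p_{b_i+1}}$. Summing over all coordinates defines $\theta_K$ and $T_K=[K,\tau_K,\theta_K]$ with $\partial T_K=B$. Each coordinate is carried by a single simple path, so $|\theta_{K,\iota}|\le1$ and hence $\|\theta_K\|_{\ell^\infty}\le1$ everywhere. The reverse bound $\|\theta_K\|_{\ell^\infty}\ge1$ $\mathcal{H}^1$--a.e.\ amounts to saying that every edge is covered by at least one such path, and this follows from minimality: if an edge lay on no within--part path, then removing it would keep each $S_i$ on a single side, preserving all the required connections while strictly decreasing $\mathcal{H}^1(K)$, a contradiction. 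Therefore $\|\theta_K\|_{\ell^\infty}=1$ $\mathcal{H}^1$--a.e.\ and $\mathbb{M}(T_K)=\mathcal{H}^1(K)$.

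Finally I would close the chain of inequalities. For a mass minimizer $T$ of Problem~\ref{massminpro}, the two ingredients give
$\mathcal{H}^1(\mathrm{supp}(T))\le\mathbb{M}(T)\le\mathbb{M}(T_K)=\mathcal{H}^1(K)\le\mathcal{H}^1(\mathrm{supp}(T))$,
where the second inequality uses that the competitor $T_K$ built above is admissible and $T$ is optimal, and the last uses the connectivity lemma. All quantities then coincide: $\mathrm{supp}(T)$ is a minimizer for Problem~\ref{Steinertype}, $T_K$ is a minimizer for Problem~\ref{massminpro}, and $\mathbb{M}(T)=\mathcal{H}^1(K)$. The points I expect to be delicate are the decomposition argument in the connectivity lemma (guaranteeing that the joining curve genuinely lies in the support and that passing to the closed support does not raise the $\mathcal{H}^1$--measure), the compactness of $\mathrm{supp}(T)$ needed to view it as a competitor, and the coverage argument ensuring $\|\theta_K\|_{\ell^\infty}=1$ almost everywhere.
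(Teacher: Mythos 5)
Your proposal follows essentially the same route as the paper's own sketch: the mass--length inequality coming from the $\ell^\infty$--norm, the connectivity of the support of any competitor extracted from the component structure of $B$ via a curve decomposition, the construction of $T_K$ from the unique paths in the loop-free minimizer, and the comparison of masses and lengths (you close with a direct chain of inequalities where the paper argues by contradiction, but this is the same argument). You even fill in a point the paper glosses over, namely that minimality of $K$ forces every portion of $K$ to be covered by some path so that $\|\theta_K\|_{\ell^\infty}=1$ $\mathcal{H}^1$--a.e., and you correctly flag the remaining delicate points (cycle removal, the passage from the carrying set to the closed support) that the paper likewise only sketches.
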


This result can be obtained with minor changes from \cite[Theorem 2.4]{Mar_Mas1}. For the reader's convenience, we give a sketch of the proof.\\

\textit{Sketch of the Proof}

Given $K$ a minimizer for Problem~\ref{Steinertype}, we construct $T_K$ exploiting one crucial property of the solutions to the Steiner tree problem, namely the absence of loops in their support. We begin with the group of points $S_1=\{p_1,\dots,p_{n_1}\}$. For every point $p_j$, $j=1,\dots,n_1-1$, we consider the (unique) path $K_j$ in $K$, oriented by $\tau_j$, connecting $p_{n_1}$ to $p_j$. We denote $T_j:=[K_j,\tau_j, g_j]$ and we let $T_K^1:=\sum_{j=1}^{{n_1}-1} T_j$. One can see immediately that $\partial T_K^1=\partial T\res S_1$. 

Analogously we construct $T_K^2,\dots,T_K^k$ and we define $T^K:=\sum_{i=1}^k T_K^i$. By construction we have that the support of $T_K$ is contained in $K$, and by the choice of the norm on $\R^{n-k}$ we have $\|\theta_K\|=1$ $\mathcal{H}^1$--a.e. on $K$ and therefore, by \eqref{e:mass}, we have 
\begin{equation}\label{e:steiner1}
\mathbb{M}(T_K)= \mathcal{H}^1(K).
\end{equation}

Now fix any competitor $Z$ for Problem \ref{massminpro}. By the structure of the boundary $B$, the support of $Z$ contains a connected path from each point of $S_1\setminus\{p_{n_1}\}$ to $p_{n_1}$, and the same is true for the groups $S_2,\dots,S_k$. Hence the support of $Z$ is a competitor for Problem \ref{Steinertype}. Considering that the norm of each non--zero element of $\Z^{n-k}$ is at least one, we deduce that
$$\mathbb{M}(Z)\geq\mathcal{H}^1(K),$$
hence, by \eqref{e:steiner1}, $T_K$ is a solution to Problem \ref{massminpro}.
\medskip

The other implication is more technical. Via an operation which heuristically consists in removing all cycles from a classical integral current, we can show that for every competitor $Z=[\mathcal{M}_Z,\tau_Z,\theta_Z]$ (with $\theta_Z\neq 0$ on $\mathcal{M}_Z$) for Problem~\ref{massminpro}, one can find another competitor $Z'=[\mathcal{M}_{Z'},\tau_{Z'},\theta_{Z'}]$ with $\mathcal{M}_{Z'}\subset \mathcal{M}_{Z}$ and $\|\theta_{Z'}\|=1$ $\mathcal{H}^1$--a.e. on $\mathcal{M}_{Z'}$, and in particular $\mathcal{H}^1(\mathcal{M}_{Z'})=\mathbb{M}(Z')\leq\mathbb{M}(Z)$, with strict inequality unless $\mathcal{H}^1(\mathcal{M}_Z\setminus\mathcal{M}_{Z'})=0$ and $\|\theta_{Z}\|=1$ a.e. on $\mathcal{M}_{Z}$. This implies in particular that $\mathbb{M}(T)=\mathcal{H}^1(\mathcal{M})$. The procedure is very close to the one presented in Step 2 of Lemma \ref{equivalenceompnot}. For a proof in a similar context, see also \cite[Lemma 2.3]{Mar_Mas1}.

Assume by contradiction that $\mathcal{M}$ is not a minimizer for Problem~\ref{Steinertype} and hence there exists a competitor $\mathcal{N}$ for Problem~\ref{Steinertype} such that $\mathcal{H}^1(\mathcal{N})<\mathcal{H}^1(\mathcal{M})$. With the procedure described in the first part of the proof we can construct a competitor $T_\mathcal{N}=[\mathcal{N},\tau_\mathcal{N},\theta_\mathcal{N}]$ for Problem~\ref{massminpro} with $\|\theta_\mathcal{N}\|=1$ $\mathcal{H}^1$--a.e. on $\mathcal{N}$. This would lead to the contradiction
$$
\mathcal{H}^1(\mathcal{N})=\mathbb{M}(T_{\mathcal{N}})\geq\mathbb{M}(T)=\mathcal{H}^1(\mathcal{M}). 
$$
\qed

As an example, we propose here a simple partitioned Steiner tree problem. This also gives us the opportunity to remark that a calibration, in the sense of Definition \ref{calibration}, does not always exist.

\begin{ex}\label{ex:withfigures}
Let $S=\{p_1,\ldots,p_4\}$ with $p_1=(1,1),p_2=(1,-1),p_3=(-1,-1)$ and $p_4=(-1,1)$
and $S_1=\{p_1,p_3\}$, $S_2=\{p_2,p_4\}$. We want to minimize the mass 
among all $1$--rectifiable currents $T$ with coefficients in $\mathbb{Z}^2$
such that $\partial T=B:=\sum_i g_i\delta_{p_i}$ where $g_i$ 
are defined according to~\eqref{gi}.
\emph{A priori} the competitors of this simple problem have two connected components,
but it is more convenient if the two components ``interact''.
Hence the supports of the minimizers $T_1$ and $T_2$ are the minimal Steiner networks
connecting the four points depicted in Figure~\ref{esempio4punti} (notice that the orientations represented in Figure~\ref{esempio4punti} match the construction explained in the proof of Theorem \ref{equi_St}).
We have $\mathbb{M}(T_i)=2+2\sqrt{3}$.

\medskip


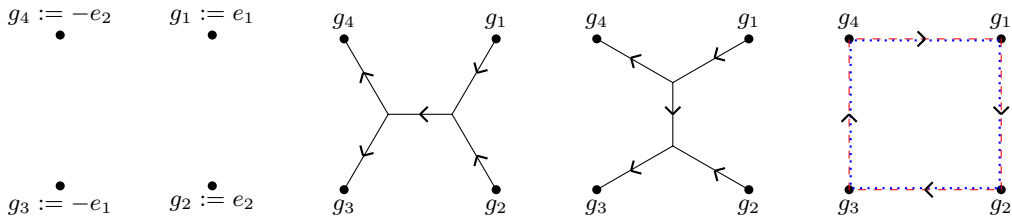
\begin{figure}[h]
\begin{center}
\begin{tikzpicture}
\path[font=\footnotesize]
(1,1) node[above]{$g_1:=e_1$}
(1,-1) node[below]{$g_2:=e_2$}
(-1,-1) node[below]{$g_3:=-e_1$}
(-1,1) node[above]{$g_4:=-e_2$};
\fill[black](1,1) circle (1.7pt);    
\fill[black](1,-1) circle (1.7pt);    
\fill[black](-1,1) circle (1.7pt);    
\fill[black](-1,-1) circle (1.7pt);    
\end{tikzpicture}\qquad
\begin{tikzpicture}
\path[font=\footnotesize]
(1,1) node[above]{$g_1$}
(1,-1) node[below]{$g_2$}
(-1,-1) node[below]{$g_3$}
(-1,1) node[above]{$g_4$};
\fill[black](1,1) circle (1.7pt);    
\fill[black](1,-1) circle (1.7pt);    
\fill[black](-1,1) circle (1.7pt);    
\fill[black](-1,-1) circle (1.7pt);    
\draw
(1,-1)--(0.42,0)
(1,1)--(0.42,0)
(0.42,0)--(-0.42,0)
(-0.42,0)--(-1,-1)
(-0.42,0)--(-1,1);
\draw[thick, shift={(0.75,0.55)}, scale=1, rotate=-210]
(0,0)to[out= -45,in=135, looseness=1] (0.1,-0.1)
(0,0)to[out= -135,in=45, looseness=1] (-0.1,-0.1);
\draw[thick, shift={(-0.75,-0.55)}, scale=1, rotate=-210]
(0,0)to[out= -45,in=135, looseness=1] (0.1,-0.1)
(0,0)to[out= -135,in=45, looseness=1] (-0.1,-0.1);
\draw[thick, shift={(0.75,-0.55)}, scale=1, rotate=30]
(0,0)to[out= -45,in=135, looseness=1] (0.1,-0.1)
(0,0)to[out= -135,in=45, looseness=1] (-0.1,-0.1);
\draw[thick, shift={(-0.75,0.55)}, scale=1, rotate=30]
(0,0)to[out= -45,in=135, looseness=1] (0.1,-0.1)
(0,0)to[out= -135,in=45, looseness=1] (-0.1,-0.1);
\draw[thick, shift={(0,0)}, scale=1, rotate=90]
(0,0)to[out= -45,in=135, looseness=1] (0.1,-0.1)
(0,0)to[out= -135,in=45, looseness=1] (-0.1,-0.1);
\end{tikzpicture}\qquad
\begin{tikzpicture}
\path[font=\footnotesize]
(1,1) node[above]{$g_1$}
(1,-1) node[below]{$g_2$}
(-1,-1) node[below]{$g_3$}
(-1,1) node[above]{$g_4$};
\fill[black](1,1) circle (1.7pt);    
\fill[black](1,-1) circle (1.7pt);    
\fill[black](-1,1) circle (1.7pt);    
\fill[black](-1,-1) circle (1.7pt);    
\draw[rotate=90]
(1,-1)--(0.42,0)
(1,1)--(0.42,0)
(0.42,0)--(-0.42,0)
(-0.42,0)--(-1,-1)
(-0.42,0)--(-1,1);
\draw[thick, shift={(0.55,0.75)}, scale=1, rotate=-240]
(0,0)to[out= -45,in=135, looseness=1] (0.1,-0.1)
(0,0)to[out= -135,in=45, looseness=1] (-0.1,-0.1);
\draw[thick, shift={(-0.55,-0.75)}, scale=1, rotate=-240]
(0,0)to[out= -45,in=135, looseness=1] (0.1,-0.1)
(0,0)to[out= -135,in=45, looseness=1] (-0.1,-0.1);
\draw[thick, shift={(0.55,-0.75)}, scale=1, rotate=60]
(0,0)to[out= -45,in=135, looseness=1] (0.1,-0.1)
(0,0)to[out= -135,in=45, looseness=1] (-0.1,-0.1);
\draw[thick, shift={(-0.55,0.75)}, scale=1, rotate=60]
(0,0)to[out= -45,in=135, looseness=1] (0.1,-0.1)
(0,0)to[out= -135,in=45, looseness=1] (-0.1,-0.1);
\draw[thick, shift={(0,0)}, scale=1, rotate=180]
(0,0)to[out= -45,in=135, looseness=1] (0.1,-0.1)
(0,0)to[out= -135,in=45, looseness=1] (-0.1,-0.1);
\end{tikzpicture}\qquad
\begin{tikzpicture}
\path[font=\footnotesize]
(1,1) node[above]{$g_1$}
(1,-1) node[below]{$g_2$}
(-1,-1) node[below]{$g_3$}
(-1,1) node[above]{$g_4$};
\fill[black](1,1) circle (1.7pt);    
\fill[black](1,-1) circle (1.7pt);    
\fill[black](-1,1) circle (1.7pt);    
\fill[black](-1,-1) circle (1.7pt);    
\draw[red,dashed]
(1,-1)--(1,1)--(-1,1)--(-1,-1)--(1,-1);
\draw[blue, thick, dotted]
(0.98,-0.98)--(0.98,0.98)--(-0.98,0.98)--(-0.98,-0.98)--(0.98,-0.98);
\draw[thick, shift={(0,-1)}, scale=1, rotate=90]
(0,0)to[out= -45,in=135, looseness=1] (0.1,-0.1)
(0,0)to[out= -135,in=45, looseness=1] (-0.1,-0.1);
\draw[thick, shift={(0,1)}, scale=1, rotate=-90]
(0,0)to[out= -45,in=135, looseness=1] (0.1,-0.1)
(0,0)to[out= -135,in=45, looseness=1] (-0.1,-0.1);
\draw[thick, shift={(-1,0)}, scale=1, rotate=0]
(0,0)to[out= -45,in=135, looseness=1] (0.1,-0.1)
(0,0)to[out= -135,in=45, looseness=1] (-0.1,-0.1);
\draw[thick, shift={(1,0)}, scale=1, rotate=180]
(0,0)to[out= -45,in=135, looseness=1] (0.1,-0.1)
(0,0)to[out= -135,in=45, looseness=1] (-0.1,-0.1);
\end{tikzpicture}
\end{center}
\caption{On the left the boundary datum $B$. In the middle the two minimizers $T_1$ and $T_2$. On the right the current $Z$.}\label{esempio4punti}
\end{figure}

We will show that there exists a 1--current $Z$ with coefficients in $\R^2$ such that $\partial Z=B$ and $\mathbb{M}(Z)<\mathbb{M}(T_i)$, $i\in\{1,2\}$. Denote by $\mathcal{M}_j$ the oriented segment joining $p_j$ to $p_{j+1}$
with $j\in\{1,2,3,4\}$ (cyclically identified) and its unit tangent vector $\tau_j$.
We define a current $Z$ with coefficients in $\mathbb{R}^2$
with support the four edges of the square
as $\sum_{j=1}^4 Z_j$  with
\begin{equation*}
\begin{array}{rlrl}
Z_1 & =[\mathcal{M}_1,\tau_1,\frac{1}{2}(e_2-e_1)],\qquad &
Z_2 & =[\mathcal{M}_2,\tau_2,-\frac{1}{2}(-e_1-e_2)],\\
Z_3 & =[\mathcal{M}_3,\tau_3,\frac{1}{2}(e_1-e_2)],\qquad &
Z_4 & =[\mathcal{M}_4,\tau_4,\frac{1}{2}(e_1+e_2)]
\end{array}
\end{equation*}
(see Figure~\ref{esempio4punti}).

Then $4=\mathbb{M}(Z)<\mathbb{M}(T_i)$.
This contradicts the existence of a calibration for $T_i$, because, as observed in Section \ref{subsec:calibrations}, if a calibration for $T_i$ existed, then $T_i$ would minimize the mass among rectifiable currents with coefficients in $\R^{2}$ having the same boundary.

\end{ex}

\bibliographystyle{abbrv}
\bibliography{CMMPT}

  \bigskip
  \footnotesize

\bigskip

  M.C:\\
  \textsc{Department of Applied Mathematics and Theoretical Physics,\\ 
  University of Cambridge,\\ 
  Wilberforce Road, UK-CB3 OWA Cambridge} \\
  \href{mc2250@maths.cam.ac.uk}{mc2250@maths.cam.ac.uk}
  
\medskip

  A.Mar:\\ 
  \textsc{Dipartimento di Matematica,\\
  Universit\`a di Trento,\\ 
  Via Sommarive 14, IT-38123 Povo (TN)}\\
   \href{andrea.marchese@unitn.it}{andrea.marchese@unitn.it}

\medskip

  A.Mas:\\ 
  \textsc{Dipartimento di Tecnica e Gestione dei Sistemi Industriali (DTG),\\
   Universit\`a di Padova,\\ Stradella S. Nicola 3, IT-36100 Vicenza}\\
  \href{annalisa.massaccesi@unipd.it}{annalisa.massaccesi@unipd.it}

\medskip

  A.P:\\
   \textsc{Dipartimento di Matematica,\\ Universit\`a di Pisa,\\ Largo Pontecorvo 5, IT-56127 Pisa}\\
 \href{alessandra.pluda@unipi.it}{alessandra.pluda@unipi.it}
 
\medskip

  R.T:\\
   \textsc{Institut f\"ur Mathematik,\\
    Universit\"at Z\"urich,\\ Winterthurerstrasse 190, CH-8057 Z\"urich}\\
\href{riccardo.tione@math.uzh.ch}{riccardo.tione@math.uzh.ch}

\end{document}